\newcommand{\tr}{\operatorname{tr}}
\newcommand{\image}{\operatorname{im}}
\newcommand{\kerne}{\operatorname{ker}}
\newcommand {\smat}      [1] {\left[\begin{smallmatrix}{#1}}
\newcommand {\srix}          {\end{smallmatrix}\right]}
\begin{document}

\title{Exact dimension reduction for rough differential equations}

\author{Martin Redmann\thanks{Martin Luther University Halle-Wittenberg, Institute of Mathematics, Theodor-Lieser-Str. 5, 06120 Halle (Saale), Germany, Email: {\tt martin.redmann@mathematik.uni-halle.de}.} \and 
Sebastian Riedel\thanks{FernUniversität in Hagen, Faculty of Mathematics and Computer Science, Chair of Applied Stochastics, 58084 Hagen, Germany, Email: {\tt sebastian.riedel@fernuni-hagen.de}.}
}

\maketitle

\begin{abstract}
In this paper, practically computable low-order approximations of potentially high-dimensional differential equations driven by geometric rough paths are proposed and investigated. In particular, equations are studied that cover the linear setting, but we allow for a certain type of dissipative nonlinearity in the drift as well. In a first step, a linear subspace is found that contains the solution space of the underlying rough differential equation (RDE). This subspace is associated to covariances of linear Ito-stochastic differential equations which is shown exploiting a Gronwall lemma for matrix differential equations. Orthogonal projections onto the identified subspace lead to a first exact reduced order system. Secondly, a linear map of the RDE solution (quantity of interest) is analyzed in terms of redundant information meaning that state variables are found that do not contribute to the quantity of interest. Once more, a link to Ito-stochastic differential equations is used. Removing such unnecessary information from the RDE provides a further dimension reduction without causing an error. Finally, we discretize a linear parabolic rough partial differential equation in space. The resulting large-order RDE is subsequently tackled with the exact reduction techniques studied in this paper. We illustrate the enormous complexity reduction potential in the corresponding numerical experiments.
\end{abstract}

\textbf{Keywords:} rough differential equations $\cdot$  model order reduction $\cdot$ Galerkin projections $\cdot$ non-Markovian processes

\noindent\textbf{MSC classification:} 60G33 $\cdot$ 60H10 $\cdot$ 60L20 $\cdot$ 60L50 $\cdot$ 65C30 $\cdot$  93A15 

\section*{Introduction}

Rough paths theory is a powerful tool in stochastic analysis that allows to study stochastic ordinary differential equations pathwise. Invented by T.~Lyons in the 90s \cite{Lyo98}, the theory found applications in a variety of fields, cf. \cite{friz_hairer} for an overview. As already conjectured in Lyons' seminal work \cite{Lyo98}, the theory has a vast potential to study stochastic \emph{partial} differential equations (SPDEs), too. Nowadays, there exist numerous approaches to these \emph{rough partial differential equations} (RPDEs). Parabolic equations with roughness in time were studied, e.g., via semigroup theory \cite{GLT06, GT10}, with (stochastic) viscosity theory \cite{CF09, CFO11, FGLS17}, and with a Feynman-Kac approach \cite{DOR15}. Note that this is by far not an exhaustive review of the existing literature, the interesting reader may consult \cite[Chapter 12]{friz_hairer} for a more extensive overview of approaches to rough-in-time RPDEs. Roughness in space of parabolic SPDEs, e.g., in the presence of space-time white noise, was also investigated with rough paths theory \cite{Hai11, HW13}. This line of thinking culminated in Hairer's solution to the KPZ-equation \cite{Hai13} and his seminal theory of regularity structures \cite{Hai14}. We are not trying to summarize the vast literature built on regularity structures here and refer, once again, to \cite{friz_hairer} for a (non-exhaustive) overview. However, when it comes to actually solve rough SPDEs numerically, much less work can be found (let us, however, mention \cite{BBRRS20, Dey12, HM18} here). \smallskip

A standard approach to solve a deterministic (time and space dependent) PDE is to discretize in space and hence to approximate the solution by a high-dimensional system of ordinary differential equations (ODEs). For a RPDE, this strategy results in a system of rough ODEs. Solving these equations numerically is a notoriously difficult problem due to the high dimension of the system, especially if many system evaluations are required. Such computationally challenging situations occur for instance in an optimal control context or if a Monte-Carlo method is used. One common approach in PDE and SPDE theory to escape the curse of dimensionality is to use \emph{model order reduction} (MOR). We refer to \cite{antoulas, morbook2} for a comprehensive overview on various projection-based MOR techniques for deterministic equations and to \cite{redmannbenner, redmannspa2} for a system-theoretic ansatz to tackle high-dimensional stochastic ODEs. The basic observation is that many equations contain redundancies that lead to the fact that the solution described by the system essentially evolves in a subspace (or manifold) of much lower dimension. MOR aims to identify these subspaces (or manifolds) 
on which the dynamics of the equations are essentially acting. Subsequently, one transforms the initial high-dimensional (stochastic) ODE to a (stochastic) ODE of lower order that describes the evolution in this smaller space (or manifold). For many equations, MOR can lead to a drastic dimension reduction while keeping a high accuracy. In fact, MOR is nowadays a standard procedure and widely used in practice. \smallskip

The contribution of this work is to make an important first step towards establishing MOR in the context of rough differential equations (RDEs). More precisely, we will study the exact dimension reduction for a linear RDE driven by a geometric rough path $\mathbf W$, i.e., an equation of the form
\begin{align*}
     dx(t) = Ax(t)\, dt + N\left(x(t)\right)K^{\frac{1}{2}} \, d\mathbf W(t),\quad x(0)=x_0 \in \R^n
\end{align*}
with state space dimension $n$ being large. In fact, we can even allow for a nonlinear drift term, cf. Section \ref{sec:setting}. Our first main result is Theorem \ref{thm_sol_space} that identifies an operator $P$ on $\mathbb R^n$ having the property that every $x(t)$ lies in the image of $P$. Interestingly, $P$ is explicit and given by
\begin{align*}
    P = \int_0^{\infty} \mathbb E[x_B(t) x_B(t)^{\top}]\, dt
\end{align*}
where $x_B$ solves the corresponding Ito stochastic differential equation
\begin{align*}
    dx_B(t) = Ax_B(t)\, dt + N\left(x_B(t)\right)K^{\frac{1}{2}} \, d B(t),\quad x_B(0) = x_0 \in \R^n.
\end{align*}
To prove this theorem, we first approximate $\mathbf W$ by smooth rough paths $\mathbf W^{\epsilon}$ and study the corresponding smooth equations. One key ingredient to make the comparison is a Gronwall-type lemma for matrix differentials, cf. Lemma \ref{lem3}. Once the statement of Theorem \ref{thm_sol_space} is proved for the smooth rough paths $\mathbf W^{\epsilon}$, one can safely pass to the limit using the continuity property of RDEs. The eigenvalue decomposition of $P$ now leads to a dimension reduced equation by using a standard procedure, cf. the discussion after Theorem \ref{thm_sol_space}. If the quantity of interest is given by $y(t) = Cx(t)$ for a matrix $C$, we can potentially reduce the dimension even further, cf. Theorem \ref{thm:quantity_interest_reduced}. In Section \ref{sec:numerical_experiments}, we apply both theorems and perform MOR for a discretized linear RPDE. For the rough heat equation and the quantity of interest being the average temperature on the domain, we can reduce the dimension of the discretized equation from $n = 100$ to $r = 33$ with practically no reduction error. In fact, even a reduction to $r = 5$ yields an error below one percent. This underlines the enormous potential of MOR for RPDEs.

\subsection*{Notation and basic definitions}

Continuous functions $W \colon [0,T] \to \R^d$ will be called \emph{paths}. Let $\alpha \in (0,1]$. If the $\alpha$-H\"older seminorm
\begin{align*}
    \sup_{s < t} \frac{\|W(t) - W(s)\|_2}{|t-s|^{\alpha}}
\end{align*}
is finite, we say $W \in C^{\alpha}$. Here and throughout the rest of the paper, $\|\cdot\|_2$ denotes the Euclidean norm. In the following, we recall some basic definitions of rough paths theory. For a more comprehensive overview, we refer the reader to \cite{friz_hairer, FV10, LCL07}. If $W \colon [0,T] \to \R^d$ is sufficiently smooth, we can define the $n$-times iterated integrals
\begin{align*}
    \mathbb W^{(n)}_{s,t} \coloneqq \int_{s \leq t_1 < \ldots < t_n \leq t} dW(t_1) \otimes \cdots \otimes dW(t_n) \in (\R^d)^{\otimes n} = \bigotimes_{k = 1}^n \R^d.  
\end{align*}
Note that $\mathbb W^{(1)}_{s,t} = W(t) - W(s)$. For some fixed $N$, we call $\mathbf{W} = (\mathbf{W}_{s,t})_{0 \leq s < t \leq T}$ given by
\begin{align*}
    \mathbf{W}_{s,t} = (1,\mathbb W^{(1)}_{s,t}, \ldots, \mathbb W^{(N)}_{s,t}) \in \bigoplus_{n = 0}^N (\R^d)^{\otimes n}
\end{align*}
with $(\R^d)^{\otimes 0} = \R$ the \emph{canonical lift} of $W$. The space $\bigoplus_{n = 0}^N (\R^d)^{\otimes n}$ is called \emph{truncated tensor algebra of level $N$}. Let 
\begin{align*}
    \mathbf{W}_{s,t} = (1,\mathbb W^{(1)}_{s,t}, \ldots, \mathbb W^{(N)}_{s,t}), \quad \tilde{\mathbf{W}}_{s,t} = (1,\tilde{\mathbb{W}}^{(1)}_{s,t}, \ldots, \tilde{\mathbb{W}}^{(N)}_{s,t})
\end{align*}
be two two-parameter functions with values in $\bigoplus_{n = 0}^N (\R^d)^{\otimes n}$. Then, we set
\begin{align*}
    \varrho_{\alpha}(\mathbf W, \tilde{\mathbf{W}}) \coloneqq \sum_{n = 1}^N \sup_{s < t} \frac{\|\mathbb{W}^{(n)}_{s,t} - \tilde{\mathbb{W}}^{(n)}_{s,t} \|}{|t-s|^{n \alpha}}.
\end{align*}
Let $W \in C^{\alpha}$ and $N \leq \frac{1}{\alpha} < N + 1$. A two-parameter function 
\begin{align*}
    \mathbf{W}_{s,t} = (1,\mathbb W^{(1)}_{s,t}, \ldots, \mathbb W^{(N)}_{s,t})
\end{align*}
with $\mathbb W^{(1)}_{s,t} = W(t) - W(s)$ is called a \emph{geometric $\alpha$-H\"older rough path associated to $W$} if there exists a sequence of smooth paths $W^{\epsilon}$ for which the canonical lifts $\mathbf{W}^{\epsilon}$ satisfy
\begin{align*}
    \varrho_{\alpha}(\mathbf W, \mathbf{W}^{\epsilon}) \to 0
\end{align*}
as $\epsilon \to 0$. It can be shown \cite{friz_hairer, FV10} that the set of all geometric rough paths constitutes a complete separable metric space with the metric $\varrho_{\alpha}$. An $\alpha$-H\"older path $x \colon [0,T] \to \R^n$ is called a \emph{solution to the rough differential equation}
\begin{align}\label{eqn:RDE_def}
    dx(t) = b(x(t))\, dt + \sigma(x(t)) \, d \mathbf{W}(t), \quad x(0) = x_0
\end{align}
if $x(0) = x_0$ and for any approximating sequence $\mathbf{W}^{\epsilon}$ to $\mathbf{W}$, the solutions $x^{\epsilon}$ to
\begin{align*}
    dx^{\epsilon}(t) = b(x^{\epsilon}(t))\, dt + \sigma(x^{\epsilon}(t)) \, d {W}^{\epsilon}(t), \quad x^{\epsilon}(0) = x_0
\end{align*}
converge in $\alpha$-H\"older metric to $x$. Conditions on $b$ and $\sigma$ under which \eqref{eqn:RDE_def} has a unique global-in-time solution can be found in \cite[Chapter 10]{FV10}. In particular, it is shown in \cite[Section 10.7]{FV10} that linear equations have unique solutions globally in time.

\section{Setting}\label{sec:setting}
Let $\mathbf W$ be a geometric rough path associated to a path $W\in C^\alpha$ that takes values in $\mathbb \R^d$. By definition, there exists a sequence of smooth paths $W^{\epsilon}$ such that their canonical lifts $\mathbf W^\epsilon$ satisfy $\mathbf W^\epsilon \rightarrow \mathbf W$  ($\epsilon \rightarrow 0$) w.r.t. the rough path metric. In this paper, we will only assume that there exist left-continuous functions $\dot W^\epsilon\in L_T^2$, i.e., $\|\dot W^\epsilon\|_{L^2_T}^2:=\int_0^T \|\dot W^\epsilon(v)\|_2^2 dv<\infty$, so that 
\begin{align}\label{rep_W_eps}
 W^\epsilon(t) =W^\epsilon(0) + \int_0^t \dot W^\epsilon(s) \, ds
\end{align}
for all $\epsilon>0$. We consider the following rough differential equation 
 \begin{subequations}\label{original_system}
\begin{align}\label{stochstatenew}
             dx(t)&=[Ax(t)+ f\left(x(t)\right)] \, dt + N\left(x(t)\right)K^{\frac{1}{2}} \, d\mathbf W(t),\quad x(0)=x_0,\\ \label{output_eq}
            y(t) &= Cx(t),\quad t\in [0, T],
\end{align}
\end{subequations}
with $A\in \mathbb R^{n\times n}$, $C\in \mathbb R^{p\times n}$, $N: \mathbb R^n\rightarrow \mathbb R^{n\times 
d}$ is a linear mapping
defined by $N(x)=\begin{bmatrix}N_1 x &\ldots & N_d x\end{bmatrix}$ for $x\in\mathbb R^n$ and given that $N_1, \ldots, N_d\in\mathbb R^{n\times n}$. Moreover, we interpret the symmetric positive semidefinite matrix $K=(k_{ij})_{i, j=1, \dots, d}$ as a covariance matrix and assume the nonlinearity to be of the form $f(x) = x g(x)$, where $g$ is a scalar function satisfying $g(x)\leq 0$ for all $x\in \mathbb R^n$. This setting covers interesting cases like the cubic function $x\mapsto x-x \|x\|_2^2$ which we can make part of the drift in \eqref{stochstatenew} by setting $g(x)=-\|x\|_2^2$. Note, however, that the classical results on rough differential equations found, e.g., in \cite{FV10} can not be applied here to see that \eqref{stochstatenew} has a unique global-in-time solution since the drift may have superlinear growth. Instead, we can argue as follows: We first consider the corresponding equation without drift, i.e.,
\begin{align}\label{RDE_fully_linear}
     dx(t) &= N\left(x(t)\right)K^{\frac{1}{2}} \, d\mathbf W(t).
\end{align}
The solution to \eqref{stochstatenew} can be obtained by a suitable flow decomposition of \eqref{RDE_fully_linear}, cf. \cite[Section 2]{RS17}. Since \eqref{RDE_fully_linear} is a linear equation, we can use the bounds in \cite[Section 10.7]{FV10} to see that all solution trajectories with initial conditions in a ball with given radius $R > 0$ lie in a compact set $K_R\subset\R^n$. Therefore, for any $R > 0$, we can replace the linear vector fields in \eqref{RDE_fully_linear} by smooth vector fields having compact support by just redefining them to be zero outside $K_R$. Note that $b(x) = Ax + x g(x)$ satisfies \cite[Condition (4.2) and (4.3)]{RS17}. Therefore, we can argue as in \cite[Theorem 4.3]{RS17} to see that the solution to \eqref{stochstatenew} exists globally in time.

We introduce the Lyapunov operator \begin{align}\label{defn_L}
 \mathcal L(X):= A X + X A^\top +\sum_{i, j=1}^d N_i X N_j^\top k_{ij}                                                                          \end{align}
for a simpler notation below, where $X$ is an $n\times n$ matrix.

\section{Approximating solution spaces based on a Gronwall lemma}

Below, we study matrix inequalities that have to be understood in terms of definiteness. In particular, we write $M_1\leq M_2$ for two matrices $M_1$ and $M_2$ if $M_2-M_1$ is a positive semidefinite matrix. Let us first derive such a matrix inequality for a quadratic form of the solution of \eqref{stochstatenew} in case the rough driver is replaced by its smooth approximation.
\begin{lemma}\label{lem1}
Let $x^\epsilon(t)$, $t\in  [0, T]$, satisfy\begin{align}\label{hom_bil_eq}
 dx^\epsilon(t)&=[Ax^\epsilon(t)+ f\left(x^\epsilon(t)\right)]dt+N\left(x^\epsilon(t)\right)K^{\frac{1}{2}}dW^\epsilon(t),\quad x^\epsilon(0)=x_0,
\end{align}
given that $W^\epsilon$ is absolutely continuous with representation in \eqref{rep_W_eps} and left-continuous $\dot W^\epsilon\in L_T^2$. Then, the quadratic form $X^\epsilon(t)=x^\epsilon(t) x^\epsilon(t)^\top \in \mathbb R^{n \times n}$ satisfies
\begin{align}\label{matrix_ineq}
  \dot X^\epsilon(t) \leq \mathcal L(X^\epsilon(t))+ X^\epsilon(t) \left\|\dot W^\epsilon(t)\right\|_2^2, \quad X^\epsilon(0)=x_0 x_0^\top,
\end{align}
for all $t\in  [0, T]$ in which $W^\epsilon$ is differentiable.
\end{lemma}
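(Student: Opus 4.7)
My plan is to exploit that, since $\dot W^\epsilon$ is left-continuous with $\dot W^\epsilon \in L^2_T$, equation~(\ref{hom_bil_eq}) is a classical ODE at every $t$ where $\dot W^\epsilon(t)$ exists. At such $t$ I apply the product rule to $X^\epsilon(t) = x^\epsilon(t) x^\epsilon(t)^\top$, so that $\dot X^\epsilon = \dot x^\epsilon (x^\epsilon)^\top + x^\epsilon (\dot x^\epsilon)^\top$, and substitute $\dot x^\epsilon = A x^\epsilon + f(x^\epsilon) + N(x^\epsilon) K^{1/2} \dot W^\epsilon$. The linear drift contributes exactly $A X^\epsilon + X^\epsilon A^\top$, while the nonlinearity $f(x) = x g(x)$ contributes $2 g(x^\epsilon)\, X^\epsilon$, which is negative semidefinite by assumption on $g$ and can be dropped when establishing a $\leq$ bound.

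The real work lies in controlling the rough cross term $S := u (x^\epsilon)^\top + x^\epsilon u^\top$, where $u := N(x^\epsilon) K^{1/2} \dot W^\epsilon$. I would use the rank-one Young inequality $(\alpha u - \beta x^\epsilon)(\alpha u - \beta x^\epsilon)^\top \geq 0$, which rearranges to $u (x^\epsilon)^\top + x^\epsilon u^\top \leq \frac{\alpha}{\beta}\, u u^\top + \frac{\beta}{\alpha}\, X^\epsilon$ for any $\alpha,\beta > 0$. The scaling is forced by the target: choosing $\beta/\alpha = \|\dot W^\epsilon(t)\|_2^2$ produces the coefficient $\|\dot W^\epsilon(t)\|_2^2$ in front of $X^\epsilon$, which is exactly what appears in (\ref{matrix_ineq}). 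This yields $S \leq \|\dot W^\epsilon\|_2^{-2}\, u u^\top + \|\dot W^\epsilon\|_2^2\, X^\epsilon$ whenever $\|\dot W^\epsilon(t)\|_2 \neq 0$; on the set where $\dot W^\epsilon(t) = 0$, the cross term $S$ vanishes and the inequality is immediate.

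To close the argument I need to dominate $\|\dot W^\epsilon\|_2^{-2}\, u u^\top$ by the stochastic part $N(x^\epsilon) K N(x^\epsilon)^\top$ of the Lyapunov operator. The elementary rank-one bound $\dot W^\epsilon (\dot W^\epsilon)^\top \leq \|\dot W^\epsilon\|_2^2\, I_d$, after conjugation by $K^{1/2}$ and then by $N(x^\epsilon)$, gives
\[
\|\dot W^\epsilon\|_2^{-2}\, u u^\top \;=\; \|\dot W^\epsilon\|_2^{-2}\, N(x^\epsilon) K^{1/2} \dot W^\epsilon (\dot W^\epsilon)^\top K^{1/2} N(x^\epsilon)^\top \;\leq\; N(x^\epsilon) K N(x^\epsilon)^\top.
\]
Recognising $N(x^\epsilon) K N(x^\epsilon)^\top = \sum_{i,j=1}^d k_{ij}\, N_i X^\epsilon N_j^\top$ from the definitions of $N$ and $K$, everything reassembles into $\mathcal L(X^\epsilon(t)) + X^\epsilon(t)\, \|\dot W^\epsilon(t)\|_2^2$, which is the claim. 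I expect the only delicate point to be identifying the correct Young scaling $\beta/\alpha = \|\dot W^\epsilon\|_2^2$ together with the rank-one estimate for $\dot W^\epsilon(\dot W^\epsilon)^\top$; once these are in place the remainder is algebraic bookkeeping.
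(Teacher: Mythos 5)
Your argument is correct and follows essentially the same route as the paper: product rule, drop the nonpositive contribution of $f(x)x^\top = g(x)\,xx^\top$, and complete the square on the rough cross term. The paper does this in a single step at the matrix level via $M_1M_2^\top + M_2M_1^\top \leq M_1M_1^\top + M_2M_2^\top$ with $M_1 = N(x^\epsilon)K^{\frac{1}{2}}$ and $M_2 = x^\epsilon(\dot W^\epsilon)^\top$, which produces $N(x^\epsilon)KN(x^\epsilon)^\top + \|\dot W^\epsilon\|_2^2\,X^\epsilon$ directly and thereby spares you both the rank-one estimate $\dot W^\epsilon(\dot W^\epsilon)^\top \leq \|\dot W^\epsilon\|_2^2 I_d$ and the case distinction at $\dot W^\epsilon(t) = 0$.
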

\begin{proof}
We obtain by the product rule that \begin{align*}
 x^\epsilon(t) x^\epsilon(t)^\top &= x_0 x_0^\top+\int_0^t d x^\epsilon(v) x^\epsilon(v)^\top +   \int_0^t x^\epsilon(v) dx^\epsilon(v)^\top  \\
 &=x_0 x_0^\top+\int_0^t A x^\epsilon(v) x^\epsilon(v)^\top +f\left(x^\epsilon(v)\right)  x^\epsilon(v)^\top +x^\epsilon(v) x^\epsilon(v)^\top A^\top +x^\epsilon(v) f\left(x^\epsilon(v)\right)^\top dv\\
 &\quad + \int_0^t N\left(x^\epsilon(v)\right)K^{\frac{1}{2}}\dot W^\epsilon(v)x^\epsilon(v)^\top + x^\epsilon(v) \dot W^\epsilon(v)^\top K^{\frac{1}{2}} N\left(x^\epsilon(v)\right)^\top dv.
\end{align*}
Now that $t\mapsto x^\epsilon(t) x^\epsilon(t)^\top$ is absolutely continuous, we can take the derivative which exists almost everywhere in points, where $W^\epsilon$ can be differentiated. Subsequently, given two matrices $M_1$ and $M_2$ of suitable dimension, we exploit that $M_1 M_2^\top + M_2 M_1^\top \leq M_1 M_1^\top + M_2 M_2^\top$. In particular, we set  $M_1= N\left(x^\epsilon(v)\right)K^{\frac{1}{2}}$, $M_2= x^\epsilon(v) \dot W^\epsilon(v)^\top$ and use that $f(x) x^\top = x f(x)^\top = x x^\top g(x) \leq 0$. This yields for almost all $t\in [0, T]$ that
\begin{align*}
\frac{d}{dt} x^\epsilon(t) x^\epsilon(t)^\top &\leq A x^\epsilon(t) x^\epsilon(t)^\top  +x^\epsilon(t) x^\epsilon(t)^\top A^\top\\
 &\quad + N\left(x^\epsilon(t)\right)K N\left(x^\epsilon(t)\right)^\top + x^\epsilon(t) \dot W^\epsilon(t)^\top \dot W^\epsilon(t) x^\epsilon(v)^\top.
\end{align*}
The result follows by $N(x) K N(x)^\top= \sum_{i, j=1}^d N_i x x^\top N_j^\top k_{ij}$.
\end{proof}
We now find a (stochastic) representation for the respective equality in \eqref{matrix_ineq} based on a quadratic form of the solution of a linear Ito-stochastic differential equation.
\begin{lemma}\label{lem2}
Let $B$ be a $d$-dimensional standard Brownian motion and $x_B(t)$, $t \geq 0$, be the solution to the following Ito-stochastic differential equation
\begin{align}\label{hom_ito_eq}
 dx_B(t)&=Ax_B(t)dt+N\left(x_B(t)\right)K^{\frac{1}{2}}dB(t),\quad x_B(0)=x_0,
\end{align}
Then, $Z(t)=\mathbb E[x_B(t) x_B(t)^\top]$, $t\geq 0$, solves 
\begin{align}\label{matrix_eq_sinu}
 Z(t) =x_0 x_0^\top+ \int_0^t \mathcal L(Z(v)) dv.
\end{align}
Moreover, given the left-continuous $\dot W^\epsilon\in L_T^2$ from \eqref{rep_W_eps}, the function $ \bar X^\epsilon(t)=\exp\left\{\int_0^t \left\|\dot W^\epsilon(v)\right\|_2^2 dv\right\} Z(t)$, $t\in [0, T]$, solves the following matrix identity:
\begin{align}\label{matrix_eq}
 \bar X^\epsilon(t) =x_0 x_0^\top+ \int_0^t \mathcal L (\bar X^\epsilon(v)) + \bar X^\epsilon(v) \left\|\dot W^\epsilon(v)\right\|_2^2 dv.
\end{align}
\end{lemma}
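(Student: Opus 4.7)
The plan is to handle the two claims separately: first establish the ODE for $Z$ via Itô calculus, then verify the identity for $\bar X^\epsilon$ by direct differentiation using the linearity of $\mathcal L$.

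For the first claim, I would apply Itô's formula to the matrix-valued process $x_B(t) x_B(t)^\top$, or more concretely to each entry $x_B^{(i)}(t) x_B^{(j)}(t)$. The drift contribution gives $A x_B(t) x_B(t)^\top + x_B(t) x_B(t)^\top A^\top$, while the Itô correction term produces precisely $\sum_{i,j=1}^d N_i x_B(t) x_B(t)^\top N_j^\top k_{ij}$ (this is the quadratic variation of $N(x_B) K^{1/2} dB$, which equals $N(x_B) K N(x_B)^\top dt$). The stochastic integral part is a true martingale since linear SDEs with Lipschitz coefficients have solutions with moments of all orders uniformly bounded on compacts (standard Gronwall argument). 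Hence taking expectation eliminates the stochastic integral and yields
\begin{align*}
Z(t) = x_0 x_0^\top + \int_0^t \bigl(A Z(v) + Z(v) A^\top + \sum_{i,j=1}^d N_i Z(v) N_j^\top k_{ij}\bigr)\, dv,
\end{align*}
which is \eqref{matrix_eq_sinu} by the definition of $\mathcal L$. A small point to mention is that differentiation under the expectation is justified by the aforementioned uniform integrability.

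For the second claim, set $\varphi(t) := \exp\bigl\{\int_0^t \|\dot W^\epsilon(v)\|_2^2 \, dv\bigr\}$, so that $\bar X^\epsilon(t) = \varphi(t) Z(t)$. Since $\dot W^\epsilon \in L^2_T$, the function $\varphi$ is absolutely continuous with $\dot\varphi(t) = \|\dot W^\epsilon(t)\|_2^2 \varphi(t)$ a.e., and by the first part $Z$ is absolutely continuous with $\dot Z(t) = \mathcal L(Z(t))$ a.e. The product rule then gives
\begin{align*}
\frac{d}{dt} \bar X^\epsilon(t) = \dot\varphi(t) Z(t) + \varphi(t) \dot Z(t) = \|\dot W^\epsilon(t)\|_2^2 \bar X^\epsilon(t) + \varphi(t) \mathcal L(Z(t)),
\end{align*}
and by linearity of $\mathcal L$ the second term equals $\mathcal L(\varphi(t) Z(t)) = \mathcal L(\bar X^\epsilon(t))$. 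Integrating from $0$ to $t$ and using $\varphi(0) = 1$, $Z(0) = x_0 x_0^\top$ yields \eqref{matrix_eq}.

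I do not anticipate a serious obstacle; the only care points are (i) ensuring $Z$ is well-defined and absolutely continuous, which follows from a priori second-moment estimates for the linear SDE \eqref{hom_ito_eq}, and (ii) handling the left-continuous but only $L^2$-integrable $\dot W^\epsilon$ correctly when applying the product rule, which is fine because the exponential of an absolutely continuous function is absolutely continuous and the identity is meant in integrated form.
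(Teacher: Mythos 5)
Your proposal is correct and follows essentially the same route as the paper: Itô's product rule on $x_B x_B^\top$, taking expectations to kill the stochastic integral and identifying the quadratic-variation term with $\sum_{i,j} N_i Z N_j^\top k_{ij}$, then the ordinary product rule combined with linearity of $\mathcal L$ for the second identity. The extra care you take regarding the martingale property and absolute continuity is implicit in the paper's argument but not a point of divergence.
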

\begin{proof}
Ito's product rule yields \begin{align*}
x_B(t) x_B(t)^\top = & x_0 x_0^\top+\int_0^t d x_B(v) x_B(v)^\top +   \int_0^t x_B(v) dx_B(v)^\top\\
&+ \int_0^t N\left(x_B(v)\right)K N\left(x_B(v)\right)^\top dv.                           
\end{align*}
We insert \eqref{hom_ito_eq} above, take the expected value and utilize $N(x) K N(x)^\top= \sum_{i, j=1}^d N_i x x^\top N_j^\top k_{ij}$. Since the Ito-integral has mean zero, we obtain \begin{align*}
\mathbb E[x_B(t) x_B(t)^\top] = & x_0 x_0^\top+\int_0^t A \mathbb E[ x_B(v) x_B(v)^\top] dv+   \int_0^t \mathbb E[x_B(v) x_B(v)^\top] A^\top dv\\
&+ \int_0^t \sum_{i, j=1}^d  N_i \mathbb E[x_B(v) x_B(v)^\top] N_j k_{ij} dv                           
\end{align*}
giving us the first part of the claim. Applying the product rule to $\bar X^\epsilon(t)=\exp\left\{\int_0^t \left\|\dot W^\epsilon(v)\right\|_2^2 dv\right\} Z(t)$ and taking \eqref{matrix_eq_sinu} into account, we see that the second part of the result follows.
\end{proof}
\begin{remark}\label{remark_stability}
First, we observe that mean square asymptotic stability, i.e., $\mathbb E\left\| x_B(t)\right\|_2^2\rightarrow 0$ for all $x_0\in\mathbb R^n$ as $t\rightarrow \infty$ is equivalent to  $\mathbb E[ x_B(t)x_B(t)^\top]\rightarrow 0$. For that reason, Lemma \ref{lem2} tells us that mean square asymptotic stability is equivalent to the asymptotic stability of \eqref{matrix_eq_sinu}. It is well known that this is equivalent to 
\begin{align*}
    \lambda(\mathcal L)\subset \mathbb C_- = \{z \in \mathbb C \,:\, \operatorname{Re}(z) < 0 \},
\end{align*} 
where $\lambda(\cdot)$ denotes the spectrum of an operator, and that the decay of the solution of \eqref{matrix_eq_sinu} to zero is exponential. We refer to \cite{damm, staboriginal, redmannspa2} for additional algebraic characterizations and for a further discussion on second moment exponential stability of \eqref{hom_ito_eq}. Let us further point out that this stability concept is stronger than almost sure exponential stability in the linear case, see \cite[Theorem 4.2]{mao}.
\end{remark}
In the next step, a relation between solutions of \eqref{matrix_ineq} and \eqref{matrix_eq} is pointed out. The following lemma can be interpreted as Gronwall type result for matrix differential inequalities/equations. We generalize arguments exploited in \cite{h2_bilinear} in the corresponding proof.
\begin{lemma}\label{lem3}
Suppose that $\dot W^\epsilon\in L_T^2$ is left-continuous. Given an (absolutely) continuous $X^\epsilon(t)$, $t\in [0, T]$, satisfying \eqref{matrix_ineq} and $\bar X^\epsilon(t)$, $t\in [0, T]$, being the solution to the matrix integral equation \eqref{matrix_eq}. Then, 
we have that $X^\epsilon(t)\leq \bar X^\epsilon(t)$ for all $t\in [0, T]$.
\end{lemma}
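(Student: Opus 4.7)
The plan is to set $D(t) := \bar X^\epsilon(t) - X^\epsilon(t)$ with $D(0) = 0$ and to prove that $D(t)$ is positive semidefinite for every $t \in [0,T]$. Integrating \eqref{matrix_ineq} and subtracting from \eqref{matrix_eq}, $D$ is absolutely continuous and satisfies the matrix differential inequality $\dot D(t) \geq \mathcal{L}(D(t)) + \phi(t)\, D(t)$ almost everywhere on $[0,T]$, where $\phi(t) := \|\dot W^\epsilon(t)\|_2^2$. Introducing the (positive semidefinite) residual $R(t) := \dot D(t) - \mathcal{L}(D(t)) - \phi(t)\, D(t) \geq 0$, we may rewrite this as a genuine ODE $\dot D = \mathcal{L}(D) + \phi D + R$ with $D(0) = 0$, which reduces the problem to showing that any solution of such an ODE driven by a positive semidefinite forcing is itself positive semidefinite.

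To eliminate the scalar multiplicative term $\phi D$, I would introduce the integrating factor $\Phi(t) := \int_0^t \phi(v)\, dv$ and set $D(t) = e^{\Phi(t)}\, \tilde D(t)$. A short computation of exactly the same type as used in Lemma \ref{lem2} shows that $\tilde D$ satisfies the simpler equation $\dot{\tilde D}(t) = \mathcal{L}(\tilde D(t)) + e^{-\Phi(t)} R(t)$ with $\tilde D(0) = 0$, so variation of constants yields the explicit representation $\tilde D(t) = \int_0^t e^{(t-s)\mathcal{L}}\bigl(e^{-\Phi(s)} R(s)\bigr)\, ds$. It then suffices to show that $e^{\tau \mathcal{L}}$ preserves the cone of positive semidefinite matrices for every $\tau \geq 0$.

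This positivity is the key step, and is where Lemma \ref{lem2} pays off: for any $v \in \mathbb R^n$, the matrix $e^{t\mathcal{L}}(v v^\top)$ coincides with $\mathbb E[x_B(t) x_B(t)^\top]$, where $x_B$ solves \eqref{hom_ito_eq} with $x_B(0) = v$, and this expectation is manifestly positive semidefinite. By linearity and the spectral decomposition of an arbitrary positive semidefinite $P = \sum_i \lambda_i v_i v_i^\top$ with $\lambda_i \geq 0$, this extends to $e^{\tau \mathcal{L}}(P) \geq 0$ for all $P \geq 0$ and $\tau \geq 0$. Applying this under the integral sign gives $\tilde D(t) \geq 0$, hence $D(t) = e^{\Phi(t)} \tilde D(t) \geq 0$, which is the claim. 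The main obstacle I anticipate is just the bookkeeping around the fact that \eqref{matrix_ineq} only holds at points of differentiability of $W^\epsilon$; however, since $X^\epsilon$ is absolutely continuous by assumption and $\bar X^\epsilon$ is given explicitly as the product of a $C^1$ exponential factor and the smooth solution $Z$ of \eqref{matrix_eq_sinu}, the integral identities and variation of constants argument go through without difficulty.
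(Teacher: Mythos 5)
Your argument is correct, but it takes a genuinely different route from the paper's. The paper proves the comparison by contradiction: it sets $Y=\bar X^\epsilon-X^\epsilon$, perturbs the integral identity for $Y$ by $\gamma I$, and shows that the first time an eigenvalue of the perturbed solution $Y_\gamma$ touches zero leads to a contradiction; the key ingredients there are the resolvent positivity of the Lyapunov operator (Theorem \ref{equiresolpos} in Appendix \ref{appendixbla}) and the left-continuity of $\dot W^\epsilon$, which guarantees that $v\mapsto z_0^\top\mathcal L_v(Y_\gamma(v))z_0$ stays above $-\gamma\|z_0\|_2^2$ just before the touching time. You instead absorb the scalar term $\|\dot W^\epsilon(t)\|_2^2\,D(t)$ by an integrating factor, write the difference as an inhomogeneous linear ODE with a positive semidefinite forcing $R\in L^1_T$, and conclude by variation of constants once you know that $e^{\tau\mathcal L}$ preserves the cone of positive semidefinite matrices --- which you extract from the stochastic representation of Lemma \ref{lem2}, since $e^{\tau\mathcal L}(vv^\top)=\mathbb E[x_B(\tau)x_B(\tau)^\top]\geq 0$ for the solution of \eqref{hom_ito_eq} started at $v$, and a general positive semidefinite matrix is a nonnegative combination of rank-one ones. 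Both proofs are complete. Yours is shorter, avoids the $\gamma$-perturbation and the eigenvalue-crossing argument, and in fact never uses the left-continuity of $\dot W^\epsilon$ (only $\|\dot W^\epsilon\|_2^2\in L^1_T$), so it establishes the lemma under a slightly weaker hypothesis. What the paper's argument buys in exchange is robustness: it needs only resolvent positivity of the time-dependent operator $\mathcal L_t$ and would survive if $\mathcal L$ itself varied in time, whereas your explicit semigroup formula exploits that $\mathcal L$ is autonomous and that the time dependence enters only through a scalar multiple of the identity. The one bookkeeping point to make explicit is that \eqref{matrix_ineq} holds almost everywhere (absolute continuity of $W^\epsilon$ gives differentiability a.e.\ with derivative $\dot W^\epsilon$ a.e.), so that $R\geq 0$ a.e., $R\in L^1_T$, and the product rule for absolutely continuous functions justify the variation-of-constants step.
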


\begin{proof}
We introduce $Y:= \bar X^\epsilon-X^\epsilon$ and the time-dependent Lyapunov operator $\mathcal L_t(Y):= \mathcal L(Y) + Y \left\|\dot W^\epsilon(t)\right\|_2^2$. From the integrated version of \eqref{matrix_ineq} and \eqref{matrix_eq}, we find that  
\begin{align}\label{Y_inequality}
Y(t)-Y(s) \geq \int_s^t \mathcal L_v(Y(v)) dv, \quad s\leq t.
\end{align}
We define $D(t) := Y(t)-\int_0^t \mathcal L_v(Y(v)) dv$ and consider a perturbed integral equation 
\begin{align}\label{perturbed_equation}
Y_\gamma (t) = \gamma I + \int_0^t [ \mathcal L_v(Y_\gamma(v)) +\gamma I] dv + D(t) 
\end{align}
with parameter $\gamma\geq 0$. By construction, we observe that $Y_0(t)= Y(t)$ for all $t\in [0, T]$. Moreover, it holds that $\lim_{\gamma \rightarrow 0} Y_\gamma(t) = Y(t)$ for all $t\in [0, T]$.

Below, let us assume that $Y_\gamma$ is not positive definite for $\gamma>0$ meaning that $\tilde z^\top Y_\gamma(\tilde t) \tilde z \leq 0$ for some $\tilde z\ne 0$ and  $\tilde t>0$.
$Y_\gamma$ is positive definite at $t=0$ as $Y_\gamma(0) = \gamma I$. This is equivalent to all  the eigenvalues of this matrix being positive. Now that $Y_\gamma$ is continuous and takes values in the space of symmetric matrices, there exist continuous and real functions $\lambda_1, \ldots, \lambda_n$ such that $\lambda_1(t), \ldots, \lambda_n(t)$ represent the eigenvalues of $Y_\gamma(t)$ for each fixed $t\in [0, T]$, see \cite[Corollary VI.1.6]{Bhatia1997}. By assumption, at least one of these eigenvalue functions crosses or touches zero, while starting with a positive value. Let $\lambda_i$ be the one that reaches zero first at some $t_0\in (0, \tilde t]$, i.e., $t_0$ is the smallest point of time with $\lambda_i(t_0)=0$.
Since we have $\lambda_i(t_0)=0$ while all the other eigenvalues are nonnegative, $Y_\gamma$ turns from a positive definite into a positive semidefinite matrix at this $t_0$ meaning that
\begin{align}\label{contradicttionprop}
z_0^\top Y_\gamma(t_0) z_0 = 0 \quad \text{and} \quad z_0^\top Y_\gamma(t) z_0 > 0,\quad 0\leq t<t_0,
\end{align}
for some $z_0 \ne 0$ while $z^\top Y_\gamma(t_0) z\geq 0$ for all $z\in \mathbb R^n$.  
Now, $\mathcal L_t$ is a Lyapunov operator for fixed $t\geq 0$ and hence resolvent positive, see Appendix \ref{appendixbla}.
The relation $0 = z_0^\top Y_\gamma(t_0) z_0 = \langle Y_\gamma(t_0), z_0z_0^\top\rangle_F$  consequently implies $0\leq \langle \mathcal L_{t_0}(Y_\gamma(t_0)), z_0z_0^\top\rangle_F = z_0^\top \mathcal L_{t_0}(Y_\gamma(t_0))z_0$ according to Theorem \ref{equiresolpos}.
As $\dot W^\epsilon$ is left-continuous, the same holds for $t\mapsto z_0^\top \mathcal L_{t}(Y_\gamma(t))z_0$. For that reason, there exists a $\delta > 0$ such that $z_0^\top \mathcal L_{v}(Y_\gamma(v))z_0>-\gamma \left\|z_0\right\|_2^2$ for all $v \in (t_0 - \delta,t_0]$. Let $s , t \in (t_0 - \delta,t_0]$ with $s \leq t$. Then,
\begin{align*}
    z_0^\top Y_\gamma(t) z_0 - z_0^\top Y_\gamma(s) z_0 &= \int_s^t z_0^\top \mathcal L_{v}(Y_\gamma(v)) z_0 + \gamma \|z_0 \|_2^2 \, dv + z_0^\top(D(t) - D(s)) z_0 \\
    &\geq z_0^\top(D(t) - D(s)) z_0.
\end{align*}
From \eqref{Y_inequality}, we obtain that $D(t) - D(s) \geq 0$. Consequently, we know that $z_0^\top Y_\gamma(s) z_0 \leq z_0^\top Y_\gamma(t) z_0$, i.e., $v \mapsto z_0^\top Y_\gamma(v) z_0$ is increasing on $(t_0 - \delta,t_0]$ which contradicts \eqref{contradicttionprop}.
Therefore, 
$Y_\gamma(t)$ is positive definite for all $t\in [0, T]$ and $\gamma>0$. Taking the limit of $\gamma \rightarrow 0$, we obtain $Y(t)\geq 0$ for all $t\in [0, T]$ which concludes the proof.
\end{proof}
As a consequence of Gronwall Lemma  \ref{lem3}, the following theorem can be established that provides information on the solution space of the considered rough differential equation.

\begin{theorem}\label{thm_sol_space}
Suppose that $x$ is the solution of \eqref{stochstatenew} on $[0, T]$ with a driver $\mathbf W$. 
Then, it holds that \begin{align}\label{sol_space}
 x(t) \in \image[P_T], \quad t\in [0, T],                                                                                                                                                \end{align}
where $P_T = \int_0^T \mathbb E[x_B(t) x_B(t)^\top] dt$ with $x_B$ solving the Ito-stochastic differential equation \eqref{hom_ito_eq}. If \eqref{hom_ito_eq} further is mean square asymptotically stable, that is, $\mathbb E\left\| x_B(t)\right\|_2^2\rightarrow 0$ as $t\rightarrow \infty$, the limit $P:=\lim_{T\rightarrow \infty} P_T$ exists. Then, $P_T$ can be replaced by $P$ in \eqref{sol_space}.
\end{theorem}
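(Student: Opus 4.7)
The plan is to reduce to the smooth approximations and chain together Lemmas \ref{lem1}, \ref{lem2}, and \ref{lem3}. Fix a sequence of smooth $W^\epsilon$ whose canonical lifts converge to $\mathbf W$; by the RDE definition the solutions $x^\epsilon$ of \eqref{hom_bil_eq} converge to $x$ on $[0,T]$. Lemma \ref{lem1} gives the matrix differential inequality \eqref{matrix_ineq} for $X^\epsilon(t) = x^\epsilon(t)x^\epsilon(t)^\top$, and Lemma \ref{lem2} identifies the exact solution of the associated equality \eqref{matrix_eq} as $\bar X^\epsilon(t) = \exp\{\int_0^t \|\dot W^\epsilon(v)\|_2^2\,dv\} Z(t)$ with $Z(t) = \mathbb E[x_B(t)x_B(t)^\top]$. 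Applying the Gronwall Lemma \ref{lem3} then yields $X^\epsilon(t) \leq \bar X^\epsilon(t)$ in the positive semidefinite order, for every $t\in[0,T]$ and every $\epsilon > 0$.

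The next step is a bookkeeping argument about positive semidefinite matrices. If $0 \leq A \leq B$, then $Bv = 0$ implies $0 \leq \langle v, Av\rangle \leq \langle v, Bv\rangle = 0$, and since $A \geq 0$ this forces $Av = 0$; equivalently $\image A \subseteq \image B$. Applied to $X^\epsilon(t) \leq \bar X^\epsilon(t)$, this yields $x^\epsilon(t) \in \image X^\epsilon(t) \subseteq \image \bar X^\epsilon(t) = \image Z(t)$, where the last equality uses that $\bar X^\epsilon(t)$ is a strictly positive scalar multiple of $Z(t)$. Applying the same principle once more to $P_T = \int_0^T Z(s)\,ds$ — using continuity of $s \mapsto Z(s)$ guaranteed by \eqref{matrix_eq_sinu} to pass from $\int_0^T \langle v, Z(s)v\rangle\,ds = 0$ to $Z(s)v = 0$ for every $s$ — one obtains $\image Z(t) \subseteq \image P_T$ for each $t\in[0,T]$. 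Combining, $x^\epsilon(t) \in \image P_T$. Since $\image P_T$ is a closed subspace and $x^\epsilon(t) \to x(t)$, we conclude $x(t) \in \image P_T$.

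For the asymptotic-stability statement, Remark \ref{remark_stability} provides exponential decay of $Z(t)$, so $P := \int_0^\infty Z(s)\,ds$ exists as a convergent improper Bochner integral. Because $Z(s) \geq 0$ pointwise, $P - P_T = \int_T^\infty Z(s)\,ds \geq 0$, i.e.\ $P_T \leq P$, whence the same image-inclusion principle gives $\image P_T \subseteq \image P$ and therefore $x(t) \in \image P$. The technical heavy lifting has already been absorbed into Lemma \ref{lem3}; what remains requires care only in two places: the image-inclusion facts must be applied in the positive semidefinite ordering rather than componentwise, and the continuity of $Z$ is essential when converting the integrated quadratic form inequality into pointwise statements about $\ker Z(s)$.
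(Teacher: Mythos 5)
Your proposal is correct and follows essentially the same route as the paper: chain Lemmas \ref{lem1}, \ref{lem2}, and \ref{lem3} to get $x^\epsilon(t)x^\epsilon(t)^\top \leq \exp\{\int_0^t \|\dot W^\epsilon(v)\|_2^2\,dv\}\,Z(t)$, then do positive-semidefinite kernel/image bookkeeping and pass to the limit $\epsilon \to 0$. The only (cosmetic) difference is that you phrase the bookkeeping as a pointwise image inclusion $x^\epsilon(t) \in \image Z(t) \subseteq \image P_T$ using continuity of $Z$, whereas the paper fixes $z \in \kerne[P_T]$, bounds $\int_0^T \langle z, x^\epsilon(t)\rangle_2^2\,dt$ by zero, and uses continuity of $x^\epsilon$; both are valid and equivalent.
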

\begin{proof}
Let $z\in \kerne[P_T]$ and let $x^\epsilon$ be the approximation of $x$ defined by \eqref{hom_bil_eq}.  Then, \begin{align*}
\int_0^T \langle z, x^\epsilon(t) \rangle_2^2 dt = z^\top  \int_0^T x^\epsilon(t)x^\epsilon(t)^\top dt\, z.                          
\end{align*}
By Lemma \ref{lem1}, we observed that $x^\epsilon(t)x^\epsilon(t)^\top$ is a continuous solution to \eqref{matrix_ineq}. Hence, it can be bounded from above by the solution $\bar X^\epsilon$ to \eqref{matrix_eq} using Lemma \ref{lem3}. By Lemma \ref{lem2}, it is known that $\bar X^\epsilon(t) =\exp\left\{\int_0^t \left\|\dot W^\epsilon(v)\right\|_2^2 dv\right\} \mathbb E[x_B(t) x_B(t)^\top]$. Consequently, we have \begin{align}\nonumber
\int_0^T \langle z, x^\epsilon(t) \rangle_2^2 dt &\leq \exp\left\{\int_0^T \left\|\dot W^\epsilon(v)\right\|_2^2 dv\right\} z^\top  \int_0^T  \mathbb E[x_B(t) x_B(t)^\top] dt\, z\\ \label{kernel_orth}
&=\exp\left\{\int_0^T \left\|\dot W^\epsilon(v)\right\|_2^2 dv\right\} z^\top P_T z = 0.                          
\end{align}
Since $x^\epsilon$ is continuous it follows that $\langle z, x^\epsilon(t) \rangle_2^2 = 0$, $t\in [0, T]$. Taking the limit as $\epsilon\rightarrow 0$, we find $\langle z, x(t) \rangle_2^2 = 0$ for all $t\in [0, T]$. This means that $x(t)$ is orthogonal to $\kerne[P_T]$. By the symmetry of $P_T$ the orthogonal complement of this kernel is $\image[P_T]$, so that the first claim follows. If the Ito-stochastic differential equation is mean square asymptotically stable, it decays exponentially fast to zero, see Remark \ref{remark_stability}. This implies exponential convergence of $\mathbb E[x_B(t) x_B(t)^\top]$ to zero. In this case, $P$ exists and it holds that $z^\top P_T z \leq z^\top P z$ for all $z\in \mathbb R^n$. Now, choosing $z\in \kerne[P]$, the second claim follows from \eqref{kernel_orth}. This concludes the proof.
\end{proof}

We can now consider the eigenvalue decomposition of $\mathcal P\in \{P_T, P\}$ given by
\begin{align*}
          \mathcal P= \begin{bmatrix}
              V_{\mathcal P} &\star
             \end{bmatrix}
 \begin{bmatrix}{\Lambda}& 0\\ 
0 &0\end{bmatrix} \begin{bmatrix}
              V_{\mathcal P}^\top \\ \star
             \end{bmatrix}= V_{\mathcal P} \Lambda V_{\mathcal P}^\top,                                                                                \end{align*}
where $\Lambda$ is the diagonal matrix of non-zero eigenvalues of $\mathcal P$ and the matrix $V_{\mathcal P}$ of associated eigenvectors provides an orthonormal basis for $\image[\mathcal P]$. Therefore, we can find a reduced order function $x_r(t)\in \mathbb R^r$, with $r$ being the number of non-zero eigenvalues, giving us $x(t) = V_{\mathcal P} x_r(t)$. Inserting this identity into \eqref{original_system} and multiplying the resulting equation with $V_{\mathcal P}^\top$ from the left leads to  \begin{subequations}\label{red_system}
\begin{align}\label{red_state}
             dx_r(t)&=[V_{\mathcal P}^\top A V_{\mathcal P}x_r(t)+ V_{\mathcal P}^\top f\left(V_{\mathcal P} x_r(t)\right)]dt\\ \nonumber
             &\quad+V_{\mathcal P}^\top N\left(V_{\mathcal P} x_r(t)\right)K^{\frac{1}{2}}d\mathbf W(t),\quad x_r(0)=V_{\mathcal P}^\top x_0,\\ \label{red_output_eq}
            y(t) &= CV_{\mathcal P} x_r(t),\quad t\in [0, T],
\end{align}
\end{subequations}      

\section{Redundancies in the quantity of interest} \label{sec_out_red}

Instead of looking at an approximation for the solution space of the state variable, let us now point out which states in $x$ can be removed from the dynamics without an effect on $y$ defined in \eqref{output_eq}. This allows us to reduce the dimension of \eqref{red_system} further. Here, we assume a purely linear system, i.e., $f\equiv 0$ in \eqref{stochstatenew}. Let $\mathcal Z$ denote the solution to \begin{align}\label{matrix_eq_sinu_dual}
 \mathcal Z(t) =C^\top C+ \int_0^t \mathcal L^*(\mathcal Z(v)) dv.
\end{align}
which can be interpreted as the dual equation of \eqref{matrix_eq_sinu}, where \begin{align}\label{defn_L_ad}
 \mathcal L^*(X):= A^\top X + X A +\sum_{i, j=1}^d N_i^\top X N_jk_{ij}.                                       \end{align}
Here, $\mathcal L^*$ is the adjoint operator of $\mathcal L$ with respect to the Frobenius inner product $\langle \cdot, \cdot\rangle_F$. As $\mathcal Z=\mathcal Z(\cdot, C^\top C)$ is linear in its initial state, we obtain $\mathcal Z(t, C^\top C)= \sum_{\ell=1}^p=\mathcal Z(t, c_\ell^\top c_\ell)$, where $c_\ell$ is the $\ell$th row of $C$. By Lemma \ref{lem2}, we know that $\mathcal Z(t, c_\ell^\top c_\ell)=\mathbb E[{\mathbcal x}_{B}(t){\mathbcal x}_{B}(t)^\top]$, where ${\mathbcal x}_{B}$ solves the Ito-stochastic differential equation
\begin{align}\label{dual_sde}
 d{\mathbcal x}_{B}(t)&=A^\top {\mathbcal x}_{B}(t)dt+N^*\left({\mathbcal x}_{B}(t)\right)K^{\frac{1}{2}}dB(t),\quad {\mathbcal x}_{B}(0)=c_\ell^\top,
\end{align}
with $N^*(x):=\begin{bmatrix}N_1^\top x &\ldots & N_d^\top x\end{bmatrix}$. This stochastic representation implies that $\mathcal Z(t)$ is a positive semidefinite matrix for all fixed $t$. This is exploited in the next lemma.
\begin{lemma}\label{lemma_kernel}
Let us define $Q_T:=\int_0^T \mathcal Z(t)dt$, where $\mathcal Z$ solves \eqref{matrix_eq_sinu_dual}. Suppose that $z\in \kerne[Q_T]$. Then, we have \begin{align}\label{kernel_preservation}
 Q_T Az = 0, \quad Cz= 0,\quad (K\otimes Q_T) \smat N_1^\top&\dots &N_d^\top\srix^\top z  =0.                                            \end{align}
If the solution of \eqref{dual_sde} satisfies $\mathbb E\left\| {\mathbcal x}_B(t)\right\|_2^2\rightarrow 0$ as $t\rightarrow \infty$, the limit $Q:=\lim_{T\rightarrow \infty} Q_T$ exists and \eqref{kernel_preservation} holds when replacing $Q_T$ by its limit.
\end{lemma}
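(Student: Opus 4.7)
The plan is to exploit that $\mathcal Z(t)$ is positive semidefinite (this is immediate from the stochastic representation $\mathcal Z(t,c_\ell^\top c_\ell)=\mathbb E[{\mathbcal x}_B(t){\mathbcal x}_B(t)^\top]$ established just before the lemma), combined with the Lyapunov-type identity obtained by integrating \eqref{matrix_eq_sinu_dual}. First I would take $z\in\kerne[Q_T]$ and note $0=z^\top Q_T z=\int_0^T z^\top \mathcal Z(t) z\,dt$. Since the integrand is non-negative and continuous in $t$, this forces $z^\top \mathcal Z(t)z=0$ for every $t\in[0,T]$, and by positive semidefiniteness of $\mathcal Z(t)$ we obtain the stronger pointwise relation $\mathcal Z(t)z=0$. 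Evaluating at $t=0$ where $\mathcal Z(0)=C^\top C$ yields $\|Cz\|_2^2=0$, giving the second identity in \eqref{kernel_preservation}.

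Next I would integrate \eqref{matrix_eq_sinu_dual} in $t$ and use linearity of $\mathcal L^*$ to obtain the matrix identity $\mathcal L^*(Q_T)=\mathcal Z(T)-C^\top C$. Applying this to $z$ and using $\mathcal Z(T)z=0$ and $Cz=0$ already established, I get
\begin{align*}
A^\top Q_T z + Q_T A z + \sum_{i,j=1}^d N_i^\top Q_T N_j k_{ij} z = 0.
\end{align*}
Taking the inner product with $z$ kills the first two terms (since $Q_T z=0$ and $Q_T$ is symmetric), leaving $\sum_{i,j}k_{ij}\,z^\top N_i^\top Q_T N_j z=0$. Writing $Wz:=\bigl[N_1^\top\;\ldots\;N_d^\top\bigr]^\top z$, this is exactly $(Wz)^\top (K\otimes Q_T)(Wz)=0$. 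As $K$ and $Q_T$ are both positive semidefinite, so is $K\otimes Q_T$, and hence $(K\otimes Q_T)Wz=0$, which is the third claimed identity. Reading off the block structure, this gives $\sum_{j}k_{ij}Q_T N_j z=0$ for each $i$; substituting back into the displayed equation (where $A^\top Q_T z=0$) finally yields $Q_T Az = 0$.

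For the infinite-horizon statement, the key observation is that $\mathcal Z(t)\geq 0$ implies $T\mapsto Q_T$ is monotone non-decreasing in the Loewner order, and by Remark \ref{remark_stability} mean-square asymptotic stability gives exponential decay of $\mathcal Z(t)$, so $Q=\lim_{T\to\infty}Q_T$ exists. For $z\in\kerne[Q]$ one has $0=z^\top Q z\geq z^\top Q_T z\geq 0$ for every $T$, hence $z\in\kerne[Q_T]$ for all $T$; the three identities with $Q_T$ then pass to the limit $T\to\infty$. The main obstacle, conceptually, is the passage from the scalar condition $z^\top \mathcal Z(t)z=0$ to the vector condition $\mathcal Z(t)z=0$ and, relatedly, the bookkeeping identifying the quadratic form $\sum_{i,j}k_{ij}z^\top N_i^\top Q_T N_j z$ with a Kronecker-product inner product so that positive semidefiniteness can be invoked; once this is in place, the remaining steps are linear-algebraic consequences of the Lyapunov identity $\mathcal L^*(Q_T)=\mathcal Z(T)-C^\top C$.
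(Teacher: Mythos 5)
Your proof is correct and follows essentially the same route as the paper: positive semidefiniteness of $\mathcal Z(t)$ gives $\mathcal Z(t)z=0$ pointwise, the Lyapunov identity $\mathcal Z(T)=C^\top C+\mathcal L^*(Q_T)$ is applied to $z$, and the Kronecker-product quadratic form yields the remaining two identities. The only (harmless) variations are that you read off $Cz=0$ directly from $\mathcal Z(0)z=C^\top Cz=0$ rather than from the non-negative splitting of the $t=T$ identity, and you handle the infinite-horizon case via $\kerne[Q]\subseteq\kerne[Q_T]$ and a limit rather than passing to the limiting equation $0=C^\top C+\mathcal L^*(Q)$ as the paper does.
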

\begin{proof}
Using \eqref{matrix_eq_sinu_dual} for $t=T$ and the linearity of $\mathcal L^*$, we obtain \begin{align}\nonumber
 \mathcal Z(T) &=C^\top C+ \mathcal L^*(Q_T)=C^\top C+  A^\top Q_T + Q_T A +\sum_{i, j=1}^d N_i^\top Q_T N_jk_{ij}\\ \label{eq_QT}
 &=C^\top C+ A^\top Q_T + Q_T A + \smat N_1^\top&\dots &N_d^\top\srix(K\otimes Q_T) \smat N_1^\top&\dots &N_d^\top\srix^\top,
\end{align}
since $\sum_{i, j=1}^d N_i^\top k_{ij} Q_T N_j  = \smat N_1^\top&\dots &N_d^\top\srix(K\otimes Q_T) \smat N_1^\top&\dots &N_d^\top\srix^\top$. Suppose that $z\in \kerne[Q_T]$. Then, we have $0=z^\top Q_T z = \int_0^T \left\|\mathcal Z(t)^{\frac{1}{2}} z\right\|_2^2dt$ exploiting that $\mathcal Z(t)$ is positive semidefinite. As $\mathcal Z$ is continuous, we obtain that $\mathcal Z(t)z=0$ for all $t\in [0, T]$. Now, we can multiply \eqref{eq_QT} with $z^\top$ from the left and $z$ from the right yielding \begin{align}\label{kernel_id}
0=z^\top C^\top C z+ z^\top \smat N_1^\top&\dots &N_d^\top\srix(K\otimes Q_T) \smat N_1^\top&\dots &N_d^\top\srix^\top z.
\end{align}
$K\otimes Q_T$ is a positive semidefinite matrix, because $K$ and $Q_T$ are positive semidefinite. Hence, both summands on the right-hand side of \eqref{kernel_id} must be zero. Therefore, we have $Cz=0$ and $(K\otimes Q_T) \smat N_1^\top&\dots &N_d^\top\srix^\top z =0$. With this knowledge, we multiply \eqref{eq_QT} only with $z$ from the right resulting in $Q_T A z =0$. Finally, $\mathbb E\left\| {\mathbcal x}_B(t)\right\|_2^2\rightarrow 0$ is equivalent to $\mathbb E[{\mathbcal x}_B(t){\mathbcal x}_B(t)^\top]\rightarrow 0$ as $t\rightarrow \infty$. In particular, this convergence is exponential, see Remark \ref{remark_stability}. Therefore, $\mathcal Z$ converges exponentially fast to zero yielding the existence of $Q=\int_0^\infty \mathcal Z(t) dt$. Taking the limit of $T\rightarrow \infty$ in \eqref{eq_QT}, this $Q$ satisfies $0=C^\top C + \mathcal L^*(Q)$, so that the above arguments can be used to proof the same result for $Q$ instead of $Q_T$.
\end{proof}
Notice that mean square asymptotic stability of \eqref{dual_sde} exploited in Lemma \ref{lemma_kernel} is equivalent to the same type of stability in \eqref{hom_ito_eq} since $\lambda(\mathcal L^*)=\lambda(\mathcal L)$, see Remark \ref{remark_stability}. Let us introduce $\mathcal Q\in \{Q_T, Q\}$. Since $\mathcal Q$ is positive semidefinite, we can find an associated orthogonal basis for $\mathbb R^n$ consisting of eigenvectors $(q_k)$ of $\mathcal Q$. We define the matrix $V_{\mathcal Q}:=\smat q_1&\dots &q_r\srix$, where the columns of this matrix are the eigenvectors corresponding to the non zero eigenvalue of $\mathcal Q$. The remaining eigenvectors $q_{r+1}, \dots, q_n$ form a basis for $\kerne[\mathcal Q]$. We set $V_{\mathcal Q}^\perp:=\smat q_{r+1}&\dots &q_n\srix$. We can find processes ${\mathbcal x}_r$ and $\tilde{{\mathbcal x}}$, so that $x(t)=  V_{\mathcal Q}{\mathbcal x}_r(t) + V_{\mathcal Q}^\perp \tilde{{\mathbcal x}}(t)$ which implies that ${\mathbcal x}_r(t) =  V_{\mathcal Q}^\top x(t)$. As a consequence of Lemma \ref{lemma_kernel}, we obtain that $y(t) = Cx(t) = C V_{\mathcal Q}{\mathbcal x}_r(t)$. Now, the differential equation associated to ${\mathbcal x}_r$ is obtained by \begin{align*}
 d{\mathbcal x}_r(t) =     V_{\mathcal Q}^\top d x(t) = V_{\mathcal Q}^\top Ax(t)dt+V_{\mathcal Q}^\top N\left(x(t)\right)K^{\frac{1}{2}}d\mathbf W(t),\quad {\mathbcal x}_r(0)=V_{\mathcal Q}^\top x_0.                                \end{align*}
By Lemma \ref{lemma_kernel}, we have $V_{\mathcal Q}^\top Ax(t) = V_{\mathcal Q}^\top A(V_{\mathcal Q}{\mathbcal x}_r(t) + V_{\mathcal Q}^\perp \tilde{{\mathbcal x}}(t))= V_{\mathcal Q}^\top A V_{\mathcal Q}{\mathbcal x}_r(t)$. Moreover, given that the covariance matrix $K$ is invertible, we can multiply the last identity of \eqref{kernel_preservation} with $K^{-1} \otimes I$ providing $\mathcal Q N_i z=0$ for $z\in \kerne[\mathcal Q]$ and all $i=1, \dots, d$. This can now be exploited to obtain that $V_{\mathcal Q}^\top N\left(x(t)\right)= V_{\mathcal Q}^\top N\left(V_{\mathcal Q}{\mathbcal x}_r(t)\right)$. Let us summarize the above considerations in the following theorem.
\begin{theorem}\label{thm:quantity_interest_reduced}
Given $\mathcal Q\in \{Q_T, Q\}$ defined in Lemma \ref{lemma_kernel},  $K$ being invertible and $f\equiv 0$. 
Then, we find a reduced order system with the same quantity of interest like \eqref{original_system}. It is given by \begin{subequations}\label{red_system_Q}
\begin{align}\label{red_state_Q}
             d{\mathbcal x}_r(t)&=V_{\mathcal Q}^\top A V_{\mathcal Q}{\mathbcal x}_r(t) dt+V_{\mathcal Q}^\top N\left(V_{\mathcal Q} {\mathbcal x}_r(t)\right)K^{\frac{1}{2}}d\mathbf W(t),\quad {\mathbcal x}_r(0)=V_{\mathcal Q}^\top x_0,\\ \label{red_output_eq_Q}
            y(t) &= CV_{\mathcal Q} {\mathbcal x}_r(t),\quad t\in [0, T],
\end{align}
\end{subequations} 
with $V_{\mathcal Q}:=\smat q_1&\dots &q_r\srix$, where $q_1, \dots, q_r$ are orthonormal eigenvectors of $\mathcal Q$ corresponding to all $r$ non zero eigenvalues.
\end{theorem}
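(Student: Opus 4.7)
The proof is essentially a direct bookkeeping of Lemma \ref{lemma_kernel}, so my plan is as follows. Since $[V_{\mathcal Q}, V_{\mathcal Q}^\perp]$ is an orthonormal basis of $\mathbb R^n$ whose last $n-r$ columns span $\kerne[\mathcal Q]$, I would decompose
\[
x(t) = V_{\mathcal Q}\,{\mathbcal x}_r(t) + V_{\mathcal Q}^\perp\,\tilde{\mathbcal x}(t), \qquad {\mathbcal x}_r(t) := V_{\mathcal Q}^\top x(t),
\]
and derive a candidate reduced RDE by left-multiplying \eqref{stochstatenew} (with $f\equiv 0$) by $V_{\mathcal Q}^\top$, obtaining
\[
d{\mathbcal x}_r(t) = V_{\mathcal Q}^\top A x(t)\,dt + V_{\mathcal Q}^\top N(x(t))\,K^{1/2}\,d\mathbf W(t), \quad {\mathbcal x}_r(0)=V_{\mathcal Q}^\top x_0.
\]

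The crux is to rewrite the right-hand side purely in terms of ${\mathbcal x}_r(t)$, i.e., to show that the $\tilde{\mathbcal x}(t)$-contribution vanishes after projection. For the drift, this amounts to $V_{\mathcal Q}^\top A V_{\mathcal Q}^\perp = 0$, equivalently $A(\kerne[\mathcal Q]) \subset \kerne[\mathcal Q]$, which is precisely $\mathcal Q A z = 0$ for $z\in\kerne[\mathcal Q]$ from Lemma \ref{lemma_kernel}. For the diffusion I need $V_{\mathcal Q}^\top N_i V_{\mathcal Q}^\perp = 0$ for every $i$, i.e., $N_i(\kerne[\mathcal Q]) \subset \kerne[\mathcal Q]$. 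This is where the invertibility of $K$ enters: left-multiplying the Kronecker identity $(K\otimes \mathcal Q)\smat N_1^\top&\dots&N_d^\top\srix^\top z = 0$ from Lemma \ref{lemma_kernel} by $K^{-1}\otimes I$ and using $(K^{-1}\otimes I)(K\otimes \mathcal Q) = I\otimes \mathcal Q$ yields $\mathcal Q N_i z = 0$ for every $i$ and every $z\in\kerne[\mathcal Q]$. Combining both observations produces the reduced coefficients $V_{\mathcal Q}^\top A V_{\mathcal Q}$ and $V_{\mathcal Q}^\top N(V_{\mathcal Q}\,\cdot)$ that appear in \eqref{red_state_Q}.

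The output equation then follows from the remaining identity in Lemma \ref{lemma_kernel}, namely $Cz = 0$ for $z\in\kerne[\mathcal Q]$, equivalently $CV_{\mathcal Q}^\perp = 0$: indeed, $y(t) = Cx(t) = CV_{\mathcal Q}\,{\mathbcal x}_r(t) + CV_{\mathcal Q}^\perp\,\tilde{\mathbcal x}(t) = CV_{\mathcal Q}\,{\mathbcal x}_r(t)$, matching \eqref{red_output_eq_Q}. The argument is identical in the two cases $\mathcal Q = Q_T$ and $\mathcal Q = Q$, since Lemma \ref{lemma_kernel} supplies the same three kernel-invariance identities for both. I do not anticipate a genuine obstacle here; the only delicate point is the role of $K^{-1}$ in extracting coordinate-wise $N_i$-invariance from the aggregated Kronecker identity, without which the diffusion reduction would fail.
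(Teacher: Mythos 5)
Your proposal is correct and follows essentially the same route as the paper: the paper's justification for Theorem \ref{thm:quantity_interest_reduced} is precisely the discussion preceding it, which uses the decomposition $x(t)=V_{\mathcal Q}{\mathbcal x}_r(t)+V_{\mathcal Q}^\perp\tilde{\mathbcal x}(t)$, the kernel invariances $\mathcal Q Az=0$ and (via left-multiplication by $K^{-1}\otimes I$) $\mathcal Q N_i z=0$, and $Cz=0$ from Lemma \ref{lemma_kernel}. Your identification of the role of $K^{-1}$ in extracting the coordinate-wise identities $\mathcal Q N_i z = 0$ matches the paper exactly.
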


\section{Numerical experiments}\label{sec:numerical_experiments}

Let the regularity of $\mathbf W=(W, \mathbb W)$ now be $\alpha\in(1/3, 1/2]$. As before, we assume that it can be approximate (w.r.t. the rough path metric) by the lift of $W^\epsilon$ with representation \eqref{rep_W_eps}. 

\subsection{Linear rough PDEs and Feynman-Kac solutions}

 We aim to study the solution 
 \begin{align*}
     [0,T] \times \R^m \ni (t,x) \mapsto u(t,x)
 \end{align*} 
 to the initial value problem
\begin{align}\label{RPDE}
       du = L(u) \, dt + \sum_{k=1}^d \Gamma_k(u) \, d\mathbf{W}_k,\quad u(0, \cdot) = g,
  \end{align}
where $L$ and $\Gamma = (\Gamma_1, \ldots, \Gamma_d)$ are 
\begin{align*}
  Lh(\zeta) &:= \frac{1}{2} \tr\left( \sigma(\zeta) \sigma(\zeta)^\top D^2 h(\zeta) \right) +
  \left\langle b(\zeta), Dh(\zeta) \right\rangle+c(\zeta) h(\zeta),\\ 
    \Gamma_k h(\zeta) &:= \left\langle \beta_k(\zeta), Dh(\zeta)\right\rangle+ \gamma_k(\zeta)h(\zeta),
\end{align*}
for a suitable test function $h:\mathbb R^m \rightarrow \mathbb R$.  For the Feynman-Kac approach, it will be convenient to apply the time change $t \mapsto T - t$ and to study the equivalent terminal value problem
\begin{align}\label{RPDE_terminal}
       - dv = L(v) \, dt + \sum_{k=1}^d \Gamma_k(v) \, d \overleftarrow{\mathbf{W}}_k,\quad v(T, \cdot) = g,
  \end{align}
instead where $ \overleftarrow{\mathbf{W}}$ denotes the time reversed rough path $\overleftarrow{\mathbf{W}}(t) = \mathbf{W}(T-t)$. If we replace $\overleftarrow{\mathbf{W}}$ by $\overleftarrow{W}^\epsilon$, then every bounded $v^\epsilon\in C^{1,2}([0, T]\times\mathbb R^m, \mathbb R)$ solution to the PDE (driven by $\overleftarrow{W}^\epsilon$) has the Feynman-Kac representation
 \begin{align} \label{feynman_kac_rep}
    v^\epsilon(t,\zeta) = \mathbb E\left[ g(x^\zeta(T)) \exp\left( \int_t^T c(x^\zeta(s))\, ds + \int_t^T \gamma(x^\zeta(s)) \dot{\overleftarrow{W}^{\epsilon}}(s) \, ds  \right) \right], 
        \end{align}       
  where $x^\zeta$ is the solution to the Ito-stochastic differential equation
 \begin{align*}
    dx^\zeta(s) &= b\left((x^\zeta(s)\right) \, ds + \begin{bmatrix}\sigma\left(x^\zeta(s)\right)  \beta\left(x^\zeta(s)\right)\end{bmatrix} 
\begin{bmatrix}
  dB(s) \\ d\overleftarrow{W}^\epsilon(s)
 \end{bmatrix}, \quad t \leq s \leq T, \\
 x^\zeta(t) &= \zeta,
\end{align*}
If $v^\epsilon\not\in C^{1,2}([0, T]\times\mathbb R^m, \mathbb R)$, we use \eqref{feynman_kac_rep} to define the solution of the PDE as long as the associated stochastic differential equation admits a unique solution. Now, given that the initial value $g$ is continuous and bounded, \cite{phd_diehl, DOR15, friz_hairer} showed that for $v^\epsilon$ in \eqref{feynman_kac_rep}, it holds that 
  \begin{align}\label{FK_sol_RPDE}
 v^\epsilon(t,\zeta)\rightarrow   v(t,\zeta) \coloneqq \mathbb E\left[ g(x^\zeta(T)) \exp\left( \int_t^T c(x^\zeta(s)) \, ds + \int_t^T \gamma(x^\zeta(s))  \, d\overleftarrow{\mathbf W}(s)  \right) \right], 
        \end{align}       
point-wise in time and space. Here, $x^\zeta$ is the solution to the rough differential equation
 \begin{align}
    \begin{split}
        dx^\zeta(s) &= b\left((x^\zeta(s)\right) \, ds + \begin{bmatrix}\sigma\left(x^\zeta(s)\right) & \beta\left(x^\zeta(s)\right)\end{bmatrix} 
d\mathbf{Z}(s), \quad  t \leq s \leq T, \\
x^\zeta(t) &= \zeta,
    \end{split}
    \label{eqn:hybrid_rde}
\end{align}
with the joint rough path $\mathbf{Z} = \left(Z, \mathbb{Z}\right)$ \begin{equation*}
  Z(t) :=
  \begin{pmatrix}
    B(t)\\
    \overleftarrow{W}(t)
  \end{pmatrix},\quad
  \mathbb{Z}_{s,t} :=
  \begin{pmatrix}
    \int_s^t (B(v)-B(s))\otimes dB(v) & \int_s^t (\overleftarrow{W}(v) - \overleftarrow{W}(s)) \otimes dB(v)\\
    \int_s^t (B(v)-B(s))\otimes d\overleftarrow{W}(v) & \overleftarrow{\mathbb{W}}_{s,t}
  \end{pmatrix},
  \end{equation*}
  where the stochastic integrals are understood as Ito-integrals. The limit $v$ now defines the solution to \eqref{RPDE} given that \eqref{eqn:hybrid_rde} has a unique solution. 

\subsection{Dimension reduction for spatially discretized rough heat equations}

We specify the coefficients for our numerical experiments by setting $\sigma\equiv \sqrt{2} I$, $b\equiv 0$ and $c\equiv 0$ resulting in the rough heat equation \begin{align}\label{heat_RPDE}
       du(t, \zeta) = \Delta u(t, \zeta) \, dt + \sum_{k=1}^d \big(\left\langle \beta_k(\zeta), \nabla u(t, \zeta)\right\rangle +\gamma_k(\zeta) u(t, \zeta)\big) d\mathbf{W}_k(t),\quad u(0, \cdot) = g.
  \end{align}
Instead of exploiting the Feynman-Kac representation in \eqref{FK_sol_RPDE}, we formally discretize \eqref{RPDE} by a finite difference scheme. Moreover, we consider the bounded spatial domain $[0, 1]^m$ (in contrast to the above Feynman-Kac theory). Here, we set additional boundary conditions which are assumed to be of Dirichlet type. Notice that  equation \eqref{heat_RPDE} can then also be defined in the mild sense (for general non geometric drivers) when the transport term is absent, see \cite{friz_hairer, GT10}.\smallskip

For simplicity let us set $m=1$. Then, $h_\zeta:=\frac{1}{(n+1)}$ is supposed to be the spatial step size parameter leading to a grid $\zeta_j = j h_\zeta$ for $j=0, 1, \dots, n+1$. Intuitively, we find that  $x_j(t)\approx u(t, \zeta_j)$, where 
 \begin{equation}    \label{finite_difference_model}                                                       
\begin{aligned}
dx_1(t) &= \frac{x_2(t)-2x_1(t)}{h_\zeta^2} dt + \sum_{k=1}^d \Big(\beta_k(\zeta_1)\frac{x_2(t)-x_1(t)}{h_\zeta}+\gamma_k(\zeta_1) x_1(t)\Big) d\mathbf W_k(t),\\
dx_j(t) &= \frac{x_{j+1}(t)-2x_j(t)+x_{j-1}(t)}{h_\zeta^2} dt + \sum_{k=1}^d \Big(\beta_k(\zeta_j)\frac{x_{j+1}(t)-x_j(t)}{h_\zeta}+ \gamma_k(\zeta_j) x_j(t)\Big) d\mathbf W_k(t), \\
dx_n(t) &= \frac{-2x_n(t)+x_{n-1}(t)}{h_\zeta^2} +\sum_{k=1}^d \Big(\beta_k(\zeta_n)\frac{-x_n(t)}{h_\zeta}+ \gamma_k(\zeta_n) x_n(t)\Big) d\mathbf W_k(t)                  \end{aligned}    
\end{equation}
for $j\in\{2, \dots, n-1\}$ taking into account that $u(t, 0) = u(t, 1)=0$. The initial condition associated to \eqref{finite_difference_model} is $x(0)=\left(\begin{matrix}                                              g(\zeta_1)&\dots & g(\zeta_n)         \end{matrix}\right)^\top$. 
       $W$ shall now be $2$-dimensional, where its components are paths of independent fractional Brownian motions with Hurst index $H=0.4$. Further, let us set $n=100$, $\gamma_1(\zeta)= 4 \sin(\zeta)$, $\gamma_2(\zeta)= 4 \cos(\zeta)$, $\beta_1\equiv 0.4$, $\beta_2\equiv -0.2$ and $g(\zeta)=\expn^{-2\vert \zeta-0.5\vert^2}$, $\zeta\in [0, 1]$. We fix $T=0.5$ and introduce the quantity of interest \begin{align}\label{rec_dif_output}
  y(t) = \frac{1}{n} \sum_{j=1}^n x_j(t)
  \end{align}
being the average temperature, i.e., $C=\frac{1}{n}\begin{bmatrix}1 &\ldots & 1\end{bmatrix}$. We illustrate $y$ in Figure \ref{output1} given the driver depicted in Figure \ref{driverW}.\begin{figure}[ht]
 \begin{minipage}{0.45\linewidth}
  \hspace{-0.5cm}
 \includegraphics[width=1.1\textwidth,height=5.5cm]{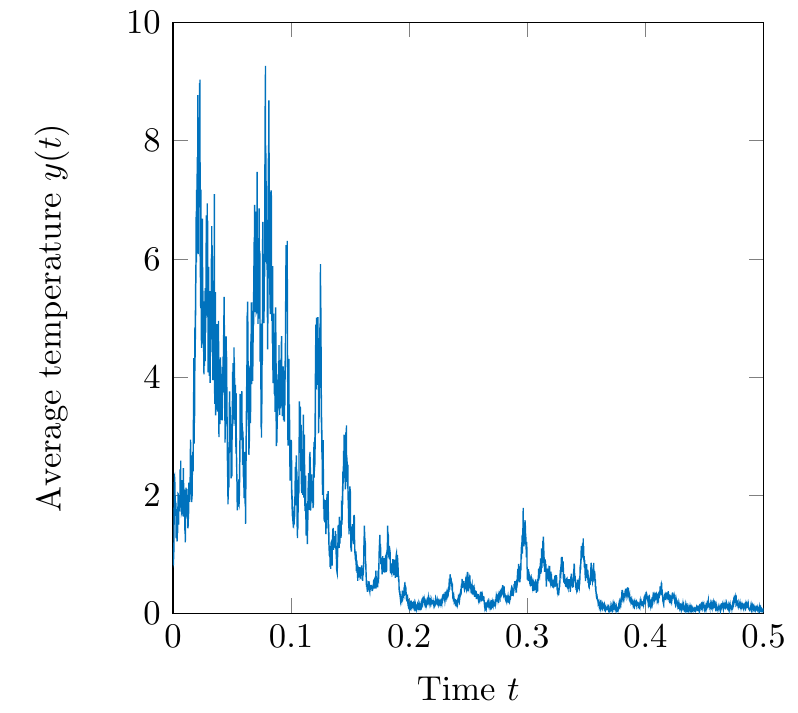}
 \caption{Output $y$ in \eqref{rec_dif_output} of \eqref{finite_difference_model}  with driver in Figure \ref{driverW}.}\label{output1}
 \end{minipage}\hspace{0.5cm}
 \begin{minipage}{0.45\linewidth}
  \hspace{-0.5cm}
 \includegraphics[width=1.1\textwidth,height=5.5cm]{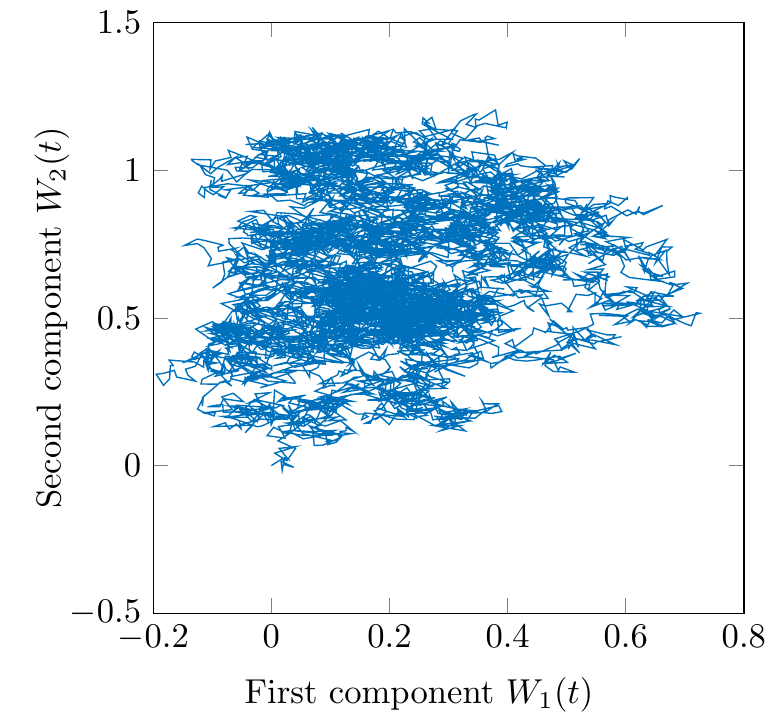}
 \caption{Path of a $2$D fractional Brownian motion used as driver.}\label{driverW}
 \end{minipage}
 \end{figure}  
Consequently, \eqref{finite_difference_model}  together with \eqref{rec_dif_output} yield a system of the form \eqref{original_system} with $f\equiv 0$. Moreover, notice that \eqref{finite_difference_model} is a mean square asymptotically stable system given the above parameters. Therefore, $P$ and $Q$, introduced in Theorem \ref{thm_sol_space} and Lemma \ref{lemma_kernel},  exist and can be used to identify unnecessary information. In particular, $P$ and $Q$ can be computed much easier than $P_T$ and $Q_T$. We obtain them from solving  $0=x_0 x_0^\top+\mathcal L(P)$ and $0=C^\top C+\mathcal L^*(Q)$ which are the equations derived by taking the limit as $t\rightarrow \infty$ in \eqref{matrix_eq_sinu} and \eqref{matrix_eq_sinu_dual}. We observe from Figure \ref{eigPQ} that $P$ and $Q$ have many eigenvalues below machine precision that are numerically zero. 
\begin{figure}
\center
 \includegraphics[width=0.75\textwidth,height=6cm]{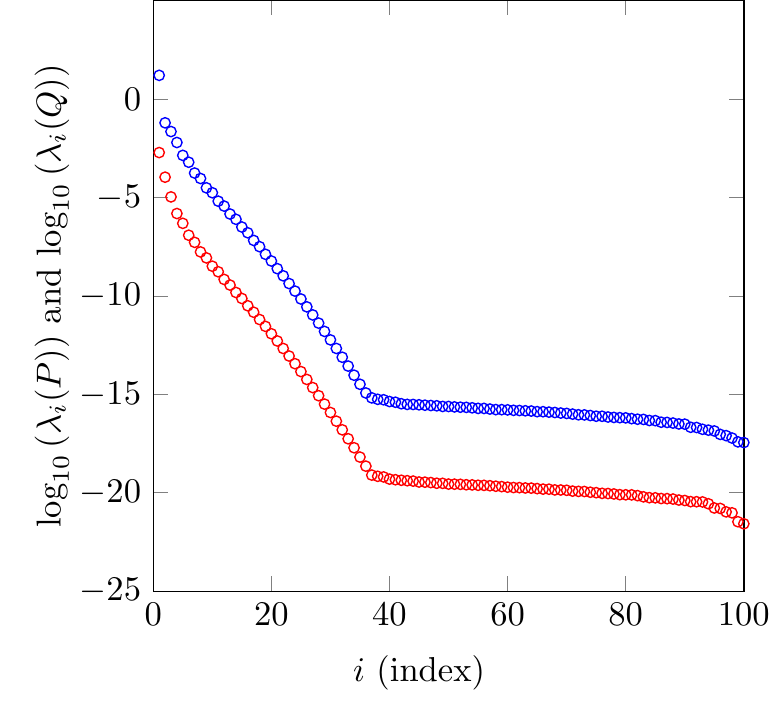}
 \caption{Logarithmic eigenvalues of $P$ (blue) and $Q$ (red).}\label{eigPQ}
 \end{figure} 
As a first step we remove the ones of $P$ resulting in a reduced model \eqref{red_system} of order $35$. Subsequently, the dimension of this system can be lowered further by applying the procedure of Section \ref{sec_out_red}. Here, two eigenvalues below machine precision can be detected finally providing a model of dimension $r=33$ in which we do not expect any reduction error. However, it is important to notice that there are several sources of numerical errors like, for instance, the time discretization leading to a non zero error in practice. For that reason, we denote the output of the reduced system by $y_r$ and find a relative $L^2_T$-error  $\frac{\left\| y-y_r\right\|_{L^2_T}}{\left\|y\right\|_{L^2_T}}= 1.5710$e$-14$ for $r=33$. This can be assumed to be an exact approximation neglecting the other numerical errors. In addition, the logarithm of the point-wise error $\frac{\left\vert  y(t)-y_r(t)\right\vert}{\left\vert y(t)\right\vert}$ for the same setting is shown in Figure \ref{pointwise_error}. Finally, we conducted experiments related to dimension reduction with a true error. In detail, in addition to the (numerical) zero eigenvalues, we neglect eigenspaces of $P$ and $Q$ that are associated to very small eigenvalues of which we have many, see Figure \ref{eigPQ}. This is motivated by an observation in Ito-SDE settings, where those direction have a tiny influence on the dynamics, see, e.g, \cite{redmannspa2}. Figure \ref{L2error} depicts the relative $L^2_T$-errors for  $r=5, 7, 9, \dots, 33$ in logarithmic scale.
We observe a small error in each case, e.g., of order $1$e$-08$ for an $r$ around $20$, Moreover, the deviation from the true output is below one percent even for $r=5$. This illustrates that rough differential equations can have a very high reduction potential beyond truncating state variables that have no contribution.  
\begin{figure}[ht]
 \begin{minipage}{0.45\linewidth}
  \hspace{-0.5cm}
 \includegraphics[width=1.1\textwidth,height=5.5cm]{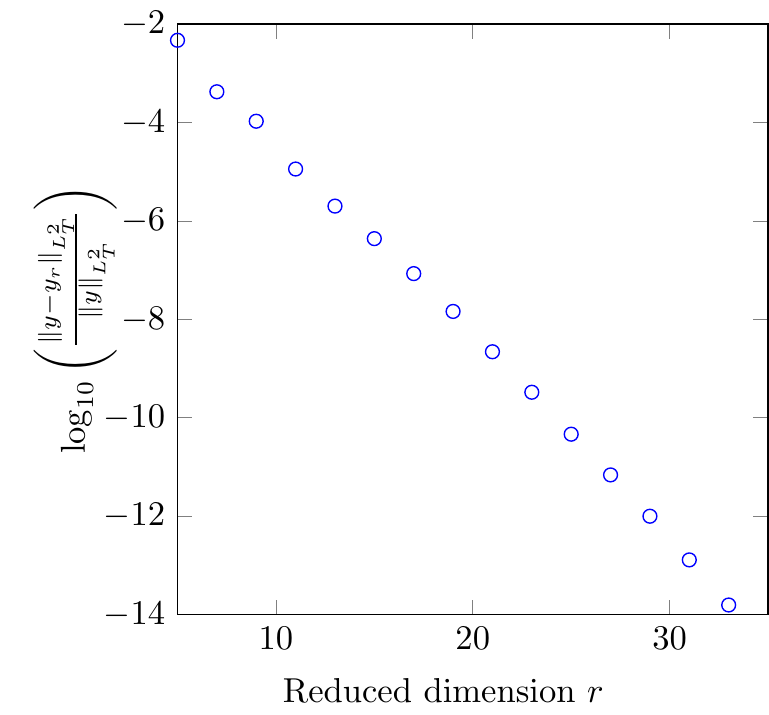}
 \caption{Logarithmic relative $L_T^2$-error for different reduced dimensions $r=5, 7, 9, \dots, 33$}\label{L2error}
 \end{minipage}\hspace{0.5cm}
 \begin{minipage}{0.45\linewidth}
  \hspace{-0.5cm}
 \includegraphics[width=1.1\textwidth,height=5.5cm]{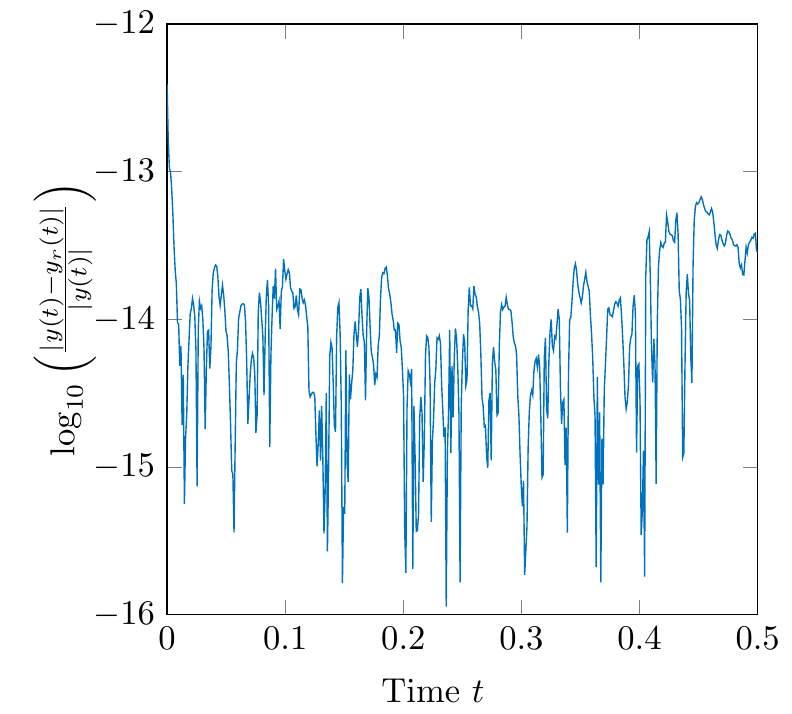}
 \caption{Logarithmic relative error in time for reduced dimension $r=33$.}\label{pointwise_error}
 \end{minipage}
 \end{figure}  
We conclude by explaining the time discretization used in order to obtain the simulation results. We implemented an implicit Runge-Kutta scheme for rough differential equations \cite{HHW18, rough_runge_kutta} with "optimal" rate solely based on the increments of the driver. Here, the implicit nature is required due to the stiffness of \eqref{finite_difference_model}. As a first step, we rewrite \eqref{stochstatenew} as \begin{align*}
             dx(t)=F\left(x(t)\right)d\tilde{\mathbf W}(t),\quad x(0)=x_0,
             \end{align*}
where $F(x):=\begin{bmatrix}Ax+f(x)  & N(x)K^{\frac{1}{2}}\end{bmatrix}$ and $\tilde W(t)=\begin{bmatrix} t  \\ W(t)\end{bmatrix}$. Given an equidistant partition $t_k=k \mathfrak h$ of $[0, T]$ with the step size $\mathfrak h$, we use the following scheme
\begin{align*}
 \begin{split}
      Z_{k, i} &= z_k + \sum_{j=1}^s a_{ij} F(Z_{k, j})  \big(\tilde W(t_{k+1})-\tilde W(t_k)\big) \\
      z_{k+1} &= z_k + \sum_{i=1}^s b_i F(Z_{k, i}) \big(\tilde W(t_{k+1})-\tilde W(t_k)\big) ,
 \end{split}
\end{align*}
with $z_0=x_0$ aiming that $z_k\approx x(t_k)$. In particular, Crouzeix's two stages ($s=2$) and diagonally implicit method is exploited that has the following Butcher tableau
\begin{align*}
\begin{array}{l|ll}
      & a_{11}&a_{12}\\  & a_{21} & a_{22}\\
      \hline
      & b_1 & b_2 \end{array}=\begin{array}{l|ll}
      & \frac{1}{2}+\frac{\sqrt{3}}{6}&0\\  & \frac{-\sqrt{3}}{3} & \frac{1}{2}+\frac{\sqrt{3}}{6}\\
      \hline
      & \frac{1}{2} & \frac{1}{2}  \end{array}.
\end{align*}
This method satisfies the optimality conditions provided in \cite{rough_runge_kutta} and hence has a convergence order arbitrary close to $2 H -0.5$, where $\frac{1}{3}<H\leq \frac{1}{2}$ is the Hurst index of a fractional Brownian motion. Now, let us mention that all the above simulations have been conducted setting $\mathfrak h=2^{-14}$.




\appendix
\section{Resolvent positive operators}\label{appendixbla}

This section covers the essential information on resolvent positive operators that are required in this paper. We refer to \cite{damm} for a more detailed and more general discussion. In particular, we are interested in such operators on
 $\left(H^n, \langle \cdot, \cdot\rangle_F\right)$ which shall be the Hilbert space of symmetric $n\times n$ matrices and $\langle \cdot, \cdot \rangle_F$ denotes the Frobenius inner product. Further suppose that  $H^n_+$ is the associated subset of symmetric positive semidefinite matrices. We begin with the definition of positive and resolvent positive operators on $H^n$.
\begin{definition}
A linear operator $\mathcal L: H^n\rightarrow H^n$ is called positive if $\mathcal L(H^n_+)\subset H^n_+$. It is resolvent positive if there is an $\alpha_0\in\mathbb R$ such that for all $\alpha>\alpha_0$ the operator $(\alpha I - \mathcal L)^{-1}$ is positive.
\end{definition}
The Lyapunov operator defined in \eqref{defn_L} is resolvent positive observing that it is a composition of a resolvent positive operator $X\mapsto A X + X A^\top$ and a positive part $X\mapsto \sum_{i, j=1}^d N_i X N_j^\top k_{ij}$, see \cite{damm}. Below, we state an equivalent characterization of resolvent positive operators and refer once more to \cite[Section 3.2.2]{damm} for a more general framework.
\begin{theorem}\label{equiresolpos}
A linear operator $\mathcal L: H^n\rightarrow H^n$ is resolvent positive if and only if $\langle V_1, V_2\rangle_F=0$ implies $\langle \mathcal L V_1, V_2\rangle_F\geq 0$ for $V_1, V_2\in H^n_+$.
\end{theorem}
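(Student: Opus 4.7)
My plan is to prove the equivalence by routing both directions through the positivity of the semigroup $(e^{t\mathcal L})_{t \geq 0}$. In finite dimensions, I claim that $\mathcal L$ is resolvent positive if and only if $e^{t\mathcal L}(H^n_+) \subset H^n_+$ for every $t \geq 0$. The forward implication follows from the exponential product formula $e^{t\mathcal L} = \lim_{k \to \infty} (I - (t/k) \mathcal L)^{-k}$, noting that for $k$ sufficiently large we have $k/t > \alpha_0$, so $(I - (t/k)\mathcal L)^{-1} = (k/t)\bigl((k/t) I - \mathcal L\bigr)^{-1}$ is positive and $H^n_+$ is closed. The reverse implication uses the Laplace-transform identity $(\alpha I - \mathcal L)^{-1} V = \int_0^\infty e^{-\alpha t} e^{t\mathcal L} V \, dt$ for $\alpha$ larger than the spectral bound of $\mathcal L$; if $V \in H^n_+$ the integrand takes values in the closed cone $H^n_+$ and so does the integral.

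Having this equivalence, the direction ``resolvent positive $\Rightarrow$ Bony condition'' becomes a one-line differentiation argument. Fix $V_1, V_2 \in H^n_+$ with $\langle V_1, V_2\rangle_F = 0$ and set $\varphi(t) := \langle e^{t\mathcal L} V_1, V_2\rangle_F$. Since the Frobenius inner product of two positive semidefinite matrices is nonnegative and $e^{t\mathcal L} V_1 \in H^n_+$, we have $\varphi(t) \geq 0 = \varphi(0)$ for all $t \geq 0$. Hence $\langle \mathcal L V_1, V_2\rangle_F = \varphi'(0^+) \geq 0$.

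The substantive direction is ``Bony condition $\Rightarrow$ $e^{t\mathcal L}$ preserves $H^n_+$''. I will essentially reproduce the perturbation argument from the proof of Lemma \ref{lem3}. Fix $V \in H^n_+$ and, for $\gamma > 0$, let $Y_\gamma$ solve $\dot Y_\gamma = \mathcal L(Y_\gamma) + \gamma I$ with $Y_\gamma(0) = V + \gamma I$, so $Y_\gamma(0)$ is positive definite and $Y_\gamma(t) \to e^{t\mathcal L} V$ as $\gamma \to 0$. Suppose for contradiction that $Y_\gamma$ ceases to be positive definite on $[0,T]$. Using continuity of eigenvalues \cite[Corollary VI.1.6]{Bhatia1997}, let $t_0$ be the first time some eigenvalue vanishes, and let $z_0 \neq 0$ be a corresponding eigenvector. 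Then $Y_\gamma(t_0) \in H^n_+$, $V_2 := z_0 z_0^\top \in H^n_+$, and $\langle Y_\gamma(t_0), V_2\rangle_F = z_0^\top Y_\gamma(t_0) z_0 = 0$. Applying the hypothesized Bony condition to this pair gives $z_0^\top \mathcal L(Y_\gamma(t_0)) z_0 \geq 0$, so
\begin{align*}
\frac{d}{dt} \bigl(z_0^\top Y_\gamma(t) z_0\bigr)\bigg|_{t=t_0} = z_0^\top \mathcal L(Y_\gamma(t_0)) z_0 + \gamma \|z_0\|_2^2 > 0,
\end{align*}
contradicting the fact that $z_0^\top Y_\gamma(\cdot) z_0$ was strictly positive on $[0,t_0)$ and zero at $t_0$. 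Hence $Y_\gamma(t) \in H^n_+$ for all $t \in [0,T]$; sending $\gamma \to 0$ gives $e^{t\mathcal L} V \in H^n_+$.

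The main obstacle is really just being careful about the perturbation setup so that the Bony inequality yields a strict derivative. The remaining bookkeeping — the semigroup/resolvent dictionary and the one-line differentiation in the easy direction — is routine in the finite-dimensional setting of $H^n$.
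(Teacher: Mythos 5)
Your proof is correct. Note that the paper does not actually prove Theorem \ref{equiresolpos} at all --- it is stated as a citation to Damm's monograph --- so you have supplied a self-contained argument where the authors defer to the literature. Your route (equating resolvent positivity with invariance of the cone $H^n_+$ under the semigroup $e^{t\mathcal L}$, via the backward-Euler product formula in one direction and the Laplace-transform representation of the resolvent in the other, and then handling the hard implication by the eigenvalue-crossing perturbation argument) is essentially the classical proof of this characterization, and it is pleasing that your hard direction mirrors the very perturbation technique the authors use in Lemma \ref{lem3}; there is no circularity, since Lemma \ref{lem3} invokes Theorem \ref{equiresolpos} as a black box while you invoke only the Bony-type inequality as a hypothesis. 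All the individual steps check out: the Frobenius pairing of two positive semidefinite matrices is indeed nonnegative, so $\varphi(t)=\langle e^{t\mathcal L}V_1,V_2\rangle_F$ attains its minimum at $t=0$ and $\varphi'(0^+)\geq 0$; at the first vanishing time $t_0$ the matrix $Y_\gamma(t_0)$ is still in $H^n_+$ because no other eigenvalue has crossed zero, so the Bony condition applies to the pair $\bigl(Y_\gamma(t_0), z_0z_0^\top\bigr)$; and the strict positivity of the derivative of $z_0^\top Y_\gamma(\cdot)z_0$ at $t_0$ (thanks to the $+\gamma I$ perturbation) contradicts the fact that this $C^1$ function decreases to its zero at $t_0$ from strictly positive values. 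The limit $Y_\gamma(t)=e^{t\mathcal L}(V+\gamma I)+\gamma\int_0^t e^{(t-s)\mathcal L}\,ds \to e^{t\mathcal L}V$ and the closedness of $H^n_+$ finish the argument.
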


\section*{Acknowledgments}
 MR is supported by the DFG via the individual grant ``Low-order approximations for large-scale problems arising in the context of high-dimensional
PDEs and spatially discretized SPDEs''-- project number 499366908.

\bibliographystyle{abbrv}
\bibliography{rough_mor}
\end{document}